\newtheorem{theorem}{Theorem}
\newtheorem{lemma}{Lemma}
\newtheorem{corollary}{Corollary}
\newenvironment{definition}[1][Definition:]{\begin{trivlist}\item[\hskip \labelsep {\bfseries #1}]}{\end{trivlist}}
\newenvironment{remark}[1][Remark:]{\begin{trivlist}\item[\hskip \labelsep {\bfseries #1}]}{\end{trivlist}}
\newcommand{\bigS}{\mathcal{S}}
\newcommand{\R}{\mathbb{R}}
\newcommand{\Z}{\mathbb{Z}}
\newcommand{\N}{\mathbb{N}}
\newcommand{\G}{\mathcal{G}}
\title{Algebraically recurrent random walks on groups}
\author{Itai Benjamini \and Hilary Finucane \and Romain Tessera}
\begin{document}

\maketitle

\begin{abstract}
Initial steps are presented towards understanding which finitely generated groups are almost surely generated as a semigroup by the path of a random walk on the group.
\end{abstract}

\section{Introduction}

Let $G$ be a countable group, $\mu$ be a probability measure on $G$, $\zeta_i \sim \mu $ be i.i.d., and let $X_n = \zeta_1 \zeta_2 \cdots \zeta_n$. Then we call $(X_1, X_2, \ldots)$ a {\em $\mu$-random walk on $G$}. Since Furstenberg \cite{F} qualitative properties of  random walks on groups were used  to study and classify natural properties of groups $G$ or pairs $(G, \mu)$ such as the Liouville property and amenability; see e.g.\ \cite{KV}.
In this work we define a new group property based on random walks, which we call algebraic recurrence, and we present some initial steps towards understanding which groups are algebraically recurrent.

\begin{definition}
Let $(X_1, X_2, \ldots)$ be a $\mu$-random walk on $G$, and let $\bigS_n$ denote the semigroup generated by $\{X_n, X_{n+1}, \ldots\}$. We say $(G, \mu)$ is  {\em algebraically recurrent} (AR) if for all $n$, $\bigS_n= G$ almost surely, and we call $G$ AR if $(G, \mu)$ is AR for all symmetric measures $\mu$ with $\langle supp(\mu) \rangle = G$.
\end{definition}

Most classical properties of random walks on groups, such as recurrence/transience, Liouville property, etc.\ can be abstracted from the context of groups. By contrast, the definition of algebraic recurrence requires at least some binary operation on the state set of the random walk.

The use of a semigroup rather than a subgroup in the definition may seem unnatural, but in fact the property is trivial if defined in terms of subgroups rather than semigroups. To see this, let $\G_n$ denote the {\em group} generated by $\{X_n, X_{n+1}, \ldots\}$ and suppose $\langle supp(\mu) \rangle = G$. Then for each $g \in supp(\mu)$, $\Pr(g \notin \{\zeta_{n+1}, \zeta_{n+2}, \ldots \}) = 0$, but for all $i > n$, $ \zeta_i = (X_{i-1})^{-1}X_{i} \in \G_n$. Thus, $supp(\mu) \subset \G_n$, and so $\G_n = G$ almost surely. This argument in fact proves the more general fact:

\begin{lemma}\label{lemma.inverses} If $ X_i^{-1} \in \bigS_n$ almost surely for all $i \geq n$, then $\bigS_n = G$ almost surely. \end{lemma}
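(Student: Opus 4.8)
The plan is to mimic the subgroup argument given just above, being careful that $\bigS_n$ is only closed under products and not under inversion. Throughout I would use the standing assumption $\langle supp(\mu)\rangle=G$ that is implicit in the setup. First I would pass to the event $E=\bigcap_{i\ge n}\{X_i^{-1}\in\bigS_n\}$; since this is a countable intersection of probability-one events, $\Pr(E)=1$. On $E$ every increment and its inverse can be recovered inside $\bigS_n$: for $i>n$ we have $\zeta_i=X_{i-1}^{-1}X_i$ and $\zeta_i^{-1}=X_i^{-1}X_{i-1}$, and each of these is a product of two elements of $\bigS_n$ (note $i-1\ge n$, so $X_{i-1}^{-1}\in\bigS_n$), hence $\zeta_i,\zeta_i^{-1}\in\bigS_n$.

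Next I would show $supp(\mu)\cup supp(\mu)^{-1}\subseteq\bigS_n$ almost surely. Fix $g\in supp(\mu)$. The events $\{\zeta_i=g\}$ for $i>n$ are independent and each has probability $\mu(g)>0$, so $\sum_{i>n}\Pr(\zeta_i=g)=\infty$, and the second Borel--Cantelli lemma gives that almost surely $\zeta_i=g$ for infinitely many $i>n$; in particular, on $E$ intersected with this event, $g\in\bigS_n$. Doing the same with $\zeta_i^{-1}$ shows $g^{-1}\in\bigS_n$ almost surely. Since $supp(\mu)$ is countable, taking the corresponding countable intersection of full-measure events yields $supp(\mu)\cup supp(\mu)^{-1}\subseteq\bigS_n$ almost surely.

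Finally I would invoke the elementary fact that the sub-semigroup of $G$ generated by a nonempty symmetric set $S=S^{-1}$ is actually a subgroup: it contains $gg^{-1}=e$ for any $g\in S$, and it is closed under inversion because every product $s_1\cdots s_k$ with $s_j\in S$ has inverse $s_k^{-1}\cdots s_1^{-1}$, again a product of elements of $S$. Applying this with $S=supp(\mu)\cup supp(\mu)^{-1}$, which generates $G$ as a group by the standing assumption, gives $\bigS_n\supseteq\langle S\rangle=G$, so $\bigS_n=G$ almost surely. The only point requiring care is the bookkeeping with the ``almost surely'' quantifiers — making sure that only countably many full-measure events are intersected, which is fine since $G$ is countable — so I do not expect any genuine obstacle here; the content is entirely in the observation that recovering both $\zeta_i$ and $\zeta_i^{-1}$ lets a semigroup behave like a group.
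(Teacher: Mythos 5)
Your proof is correct and follows essentially the same route the paper intends: recover the increments $\zeta_i = X_{i-1}^{-1}X_i$ (and their inverses $\zeta_i^{-1}=X_i^{-1}X_{i-1}$) inside $\bigS_n$ using the hypothesis, apply Borel--Cantelli to get $supp(\mu)\cup supp(\mu)^{-1}\subseteq \bigS_n$ almost surely, and conclude because a subsemigroup containing a symmetric generating set is all of $G$. The paper leaves this semigroup bookkeeping implicit (``this argument in fact proves the more general fact''), and you have supplied exactly the missing details.
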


In this note we show that the class of AR groups is nontrivial: i.e.\ there exist AR and non-AR groups. For example, we prove that nilpotent finitely generated groups are AR, while free groups with more than 4 generators are not AR. We also prove that Liouville random walks on polycyclic groups are AR. By \cite{K}, this includes symmetric random walks with a first finite moment. We do not know if this fact extends to {\it any} symmetric random walk on a polycyclic group.

We have to admit our frustration at not being able to establish that the standard random walk on the free group with two (or 3) generators is not AR. It turns out that  Theorem \ref{theorem.freegroup} is by far the trickiest result of this paper and we would be curious to see another --more geometric-- proof of this fact.

In view of the fact that a non-centered random walk on $\Z$ is trivially not AR, the assumption that our  random walks are symmetric seems reasonable. However, one would legitimately be tempted to extend the results of this note to centered random walks: namely random walks whose projection to any cyclic quotient is centered. We leave this aspect of the question to future developments.

\

The paper is organized as follows: in Section \ref{section:AR}, we give examples of AR groups, then  Section \ref{section:nonAR} deals with the case of free groups. Finally Section \ref{section:Questions} is dedicated to open questions.

\section{Examples of AR groups}\label{section:AR}

\subsection{Torsion groups and lamplighters}
\label{section.torsion_Z}

Recall that $G$ is a torsion group if every element of $G$ has finite order. By Lemma~\ref{lemma.inverses}, any torsion group is AR, since in a torsion group, $X_i^{-1} = X_i^m$ for some $m$, and so $X_i^{-1}$ is in any semigroup that includes $X_i$. In fact, the following much stronger result holds.

\begin{theorem}\label{theorem.torsion} Suppose $H \triangleleft G$ is a torsion group. Then $G/H$ is AR if and only if $G$ is AR. \end{theorem}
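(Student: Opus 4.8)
The plan is to prove both directions by transferring the random walk between $G$ and the quotient $Q = G/H$, using the torsion of $H$ to "upgrade" semigroup membership to subgroup membership via Lemma~\ref{lemma.inverses}. Write $\pi\colon G \to Q$ for the quotient map. Note first that $\mu$ symmetric with $\langle \mathrm{supp}(\mu)\rangle = G$ pushes forward to $\bar\mu = \pi_*\mu$, which is symmetric with $\langle \mathrm{supp}(\bar\mu)\rangle = Q$, and the $\bar\mu$-random walk $\bar X_n = \pi(X_n)$; conversely, given a symmetric generating $\bar\mu$ on $Q$ one can lift it (e.g.\ choose coset representatives and symmetrize, or lift along any section) to a symmetric generating $\mu$ on $G$. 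So the two AR statements really do talk about the same family of walks.

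For the direction "$G$ AR $\Rightarrow$ $Q$ AR": given symmetric generating $\bar\mu$ on $Q$, lift to symmetric generating $\mu$ on $G$; then $\bigS_n(G) = G$ a.s., and applying $\pi$ (a semigroup homomorphism) gives that the semigroup generated by $\{\bar X_n, \bar X_{n+1},\dots\}$ is all of $Q$ a.s. This direction is essentially free and I expect no obstacle.

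For the converse "$Q$ AR $\Rightarrow$ $G$ AR": fix symmetric generating $\mu$ on $G$ and let $\bigS_n = \bigS_n(G)$. By Lemma~\ref{lemma.inverses} it suffices to show $X_i^{-1} \in \bigS_n$ a.s.\ for every $i \ge n$. The idea: since $Q$ is AR, the projected walk generates $Q$ as a semigroup, so a.s.\ there exist indices $n \le j_1 < j_2 < \cdots < j_k$ (past the time $i$ we care about, in fact we can take them all $> i$ using that the walk revisits) with $\bar X_{j_1}\bar X_{j_2}\cdots \bar X_{j_k} = \pi(X_i)^{-1}$ in $Q$. Then in $G$ the product $w := X_{j_1}X_{j_2}\cdots X_{j_k}$ satisfies $w X_i \in H$, i.e.\ $w X_i$ has finite order, say $(wX_i)^m = e$. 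Hence $X_i^{-1} = w (X_i w)^{m-1} = (X_{j_1}\cdots X_{j_k})(X_i X_{j_1}\cdots X_{j_k})^{m-1}$, which is a product of elements of $\{X_i, X_{j_1},\dots,X_{j_k}\} \subseteq \{X_n, X_{n+1},\dots\}$, hence lies in $\bigS_n$. This gives $X_i^{-1}\in\bigS_n$ a.s., and Lemma~\ref{lemma.inverses} finishes the proof.

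The main obstacle is the measurability/almost-sure bookkeeping in the converse: I need that, almost surely, for \emph{every} $i \ge n$ simultaneously there is such a finite product of \emph{later} increments projecting to $\pi(X_i)^{-1}$. The cleanest way is to apply the AR hypothesis for $Q$ not at time $n$ but at a time beyond $i$ — since $Q$ AR means $\bigS_m(Q) = Q$ a.s.\ for all $m$, we get for each fixed $i$ that $\pi(X_i)^{-1}\in \bigS_{i+1}(Q)$ a.s., i.e.\ it is a product of $\bar X_{j}$'s with $j > i$; then intersect over the countably many $i \ge n$ to get a single almost-sure event. One should also double-check the lifting step produces a genuinely symmetric measure on $G$ with full generating support (symmetrizing a lift of a symmetric measure works, since $\mathrm{supp}$ of the lift together with its inverses generates a subgroup surjecting onto $Q$ and containing... — here one wants $\langle \mathrm{supp}\rangle = G$, which may force including generators of $H$; but $H$ torsion and the reduction via Lemma~\ref{lemma.inverses} make this harmless, or one simply notes the theorem's content is about the correspondence of walks and it suffices to treat lifted measures, with the general symmetric $\mu$ on $G$ handled directly as above without any lifting at all). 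I would present the converse in the direct form (no lifting needed) to sidestep this entirely.
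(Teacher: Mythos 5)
Your proposal is correct and follows essentially the same route as the paper: the forward direction by lifting the walk, and the converse by reducing to Lemma~\ref{lemma.inverses} and using an element $w\in\bigS_n$ with $\pi(w)=\pi(X_i)^{-1}$ together with the torsion identity $X_i^{-1}=w(X_iw)^{m-1}$ (the paper's $X^{-1}=Y(XY)^{k-1}$). Your extra care about taking the indices $j_1<\dots<j_k$ past time $i$ is unnecessary --- any element of $\bigS_n$ projecting to $\pi(X_i)^{-1}$ works, since $\bigS_n$ is closed under products with $X_i$ --- but it does no harm.
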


\begin{proof}
Suppose $G/H$ is AR, and let $\pi$ denote the projection from $G$ to $G/H$. Let $(X_1, X_2, \ldots)$ be a $\mu$-random walk on $G$ with corresponding semigroup $\bigS_n$, and let $\bar{\bigS_n}$ be the semigroup of $G/H$ corresponding to the projected random walk $(\pi(X_1), \pi(X_2), \ldots)$. By Lemma~\ref{lemma.inverses}, to show that $G$ is AR, it suffices to show $X_i^{-1} \in \bigS_n$ for all $i \geq n$.

For any $X$ and $Y$ in $G$ with $\pi(X) = \pi(Y)^{-1}$, we have $XY \in H$, so there is an exponent $k$ such that $(XY)^k = e$, and thus $X^{-1} = Y(XY)^{k-1}$. So if there is a $Y_i$ in $\bigS_n$ with $\pi(Y_i) = \pi(X_i)^{-1}$, then we also have $X_i^{-1} \in \bigS_n$.

By the algebraic recurrence of $G/H$, we have $G/H = \bar{\bigS_n}$, and in particular, $\pi(X_i)^{-1} \in \bar{\bigS_n}$. But since $\bar{\bigS_n} = \pi(\bigS_n)$, this means that there is some $Y_i$ in $\bigS_n$ with $\pi(Y_i) = \pi(X_i)^{-1}$. So $G$ is AR.

Conversely, if $G$ is AR, then any random walk on $G/H$ can be lifted to a random walk on $G$; the corresponding semigroup is all of $G$, and so projects to all of $G/H$, showing that $G/H$ is AR.

\end{proof}

The lamplighter group of a group $G$, denoted $LL(G)$, is the wreath product $\Z/2\Z \ \wr \ G$. An element of $LL(G)$ is written $(x,f)$, where $x \in G$ and $f:G \rightarrow \Z/2\Z$, and $(x,f)(y,g) = (z,h)$, where $xy = z$ and $h(a) = f(a)g(ax^{-1})$.

Lamplighters often give examples of somewhat exotic behavior; for example, $LL(\Z)$ has exponential growth but is Liouville  and $LL(\Z^3)$ is amenable but non-Liouville \cite{KV}. It follows directly from Theorem~\ref{theorem.torsion} that lamplighters do not exhibit any unusual behavior in the case of algebraic recurrence: in fact, the algebraic recurrence of $LL(G)$ corresponds exactly to the algebraic recurrence of $G$.

\begin{corollary} $LL(G)$ is AR if and only if $G$ is AR. \end{corollary}
\begin{proof} The position function $pos(x,f) = x$ is a surjective homorphism from $LL(G)$ to $G$, and the kernel of $pos$ is a torsion group with exponent two.
\end{proof}

\subsection{Finitely generated abelian groups}

Lemma~\ref{lemma.inverses} can also be used to show that $\Z$ is AR. Indeed, by the symmetry of $\mu$, almost surely for all $n$ there will be $y^+, y^- \in \bigS_n$ with $y^+ > 0$ and $y^- < 0$. Then for each $i \geq n$, if $X_i > 0$ we can write $X_iy^- + (-y^- -1)X_i = -X_i$. But the LHS is in $\bigS_n$, so $-X_i \in \bigS_n$. Similarly, if $X_i < 0$ we have $(y^+-1)X_i + -X_iy^+ = -X_i$, and again the LHS is in $\bigS_n$ so $-X_i \in \bigS_n$. Using Lemma~\ref{lemma.inverses}, this suffices to show that $\Z$ is AR.

It is not much more difficult to see that $(\Z^d, \mu_d)$ is AR, when $\mu_d$ is uniform over the standard generating set. Indeed, after finitely many steps, the random walk will have visited $d$ linearly independent points, generating the intersection of a full-dimension lattice with a cone. Eventually, the random walk will visit a point $x$ that is in the opposite cone, and by adding arbitrarily large multiples of $x$, the entire lattice is in $\bigS_n$. Since there are only finitely many cosets of the lattice, the random walk eventually visits each coset, showing that all of $\Z^d$ is in $\bigS_n$.

However, this proof does not extend to arbitrary symmetric generating measures on $\Z^d$. For example, in $\Z^2$, $\mu$ could have a very heavy tail along the line $x=y$ and a very small weight along the line $x=-y$, so that there are cones that the random walk has non-zero probability never to intersect. So for the general case, a more subtle proof is needed.

Clearly, $(G,\mu)$ is AR if and only if the trace of a $\mu$-random walk on $G$ is almost surely not contained in any maximal subsemigroup of $G$. In the case of $\Z^d$, these maximal subsemigroups are easy to describe.

\begin{lemma}\label{lemma.Zd_subsemigroups} Every proper subsemigroup of $\Z^d$ is contained either in a proper subgroup of $\Z^d$ or in a half-space of $\Z^d$. \end{lemma}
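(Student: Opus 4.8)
The plan is to classify proper subsemigroups $S \subseteq \Z^d$ by separating two cases according to whether $S$ generates a full-rank subgroup. Let $H = \langle S \rangle$ be the subgroup of $\Z^d$ generated by $S$ (equivalently, $H = S - S$, the set of differences). If $H$ has rank strictly less than $d$, then $H$ is a proper subgroup of $\Z^d$ and $S \subseteq H$, so we are done in that case. Hence the substance is the case $\operatorname{rank}(H) = d$, where we must show $S$ lies in a half-space; replacing $\Z^d$ by $H$, we may as well assume $H = \Z^d$, i.e.\ $S$ generates $\Z^d$ as a group.

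So assume $S$ generates $\Z^d$ as a group but $S \neq \Z^d$ as a semigroup. First I would pass to $\R^d$ and consider the closed convex cone $C = \overline{\operatorname{cone}(S)} \subseteq \R^d$ generated by $S$ (all nonnegative real combinations of elements of $S$, closed up). Since $S$ generates $\Z^d$ as a group, $S$ contains a basis together with enough extra elements that the differences span; the key dichotomy is whether $C = \R^d$ or $C$ is a proper cone. If $C$ is a proper closed convex cone, then by the supporting hyperplane theorem it is contained in a closed half-space $\{x : \langle x, v\rangle \ge 0\}$ for some $v \neq 0$; intersecting with $\Z^d$ shows $S$ lies in a half-space of $\Z^d$, as desired. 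So the remaining task is to derive a contradiction from the assumption that $C = \R^d$ while $S \neq \Z^d$.

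The heart of the argument is therefore: \emph{if $S$ is a subsemigroup of $\Z^d$ that generates $\Z^d$ as a group and whose cone is all of $\R^d$, then $S = \Z^d$.} Here I would argue as follows. Since $C = \R^d$, by Carathéodory's theorem for cones the origin lies in the interior of the convex hull of finitely many elements $s_1, \dots, s_k \in S$; concretely one can extract from $S$ elements positively spanning $\R^d$, so that $0$ is a positive \emph{rational}, hence (clearing denominators) a positive \emph{integer} combination of them: $\sum a_i s_i = 0$ with $a_i \in \Z_{>0}$. This is the crucial gadget: it means that for any fixed $j$, the element $-s_j$ equals $(a_j - 1)s_j + \sum_{i\neq j} a_i s_i \in S$ (when $a_j \ge 1$; and if we want $-s_j$ with multiplicity we can iterate), so $S$ is in fact closed under negation on the generating set, and hence $S$ contains a generating set of $\Z^d$ as a group together with all its inverses — but a subsemigroup containing a group generating set and its inverses is a subgroup, namely all of $\Z^d$. (This mirrors the explicit $\Z$ and $\Z^d$ computations done earlier in the paper with the cones $y^\pm$.) Thus $S = \Z^d$, contradicting properness.

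The main obstacle I anticipate is the passage from ``the real cone is everything'' to ``$0$ is a \emph{positive integer} combination of semigroup elements'': one must be careful that Carathéodory/positive-spanning arguments over $\R$ produce coefficients that can be taken rational (true, since the $s_i$ are integer vectors and the relevant linear systems have rational data) and then scaled to positive integers without any coefficient becoming zero. A clean way to handle this is to first note that $0 \in \operatorname{int}(C)$ forces, for each of a group-generating set $\{e_1,\dots,e_d, -e_1 - \cdots\}$ of directions, a representation as a nonnegative combination of $s_i$'s, and then to combine these finitely many rational relations into a single integer relation with strictly positive coefficients by adding them up. Once that gadget is in hand, the reduction of $-s_j$ to an element of $S$, and from there Lemma~\ref{lemma.inverses}-style reasoning (or just: semigroup $+$ inverses of generators $=$ group) finishes it. The rank reduction at the start and the supporting-hyperplane step are routine; the only place requiring genuine care is keeping the integer relation's coefficients positive.
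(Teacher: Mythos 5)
Your overall strategy is essentially the paper's: reduce to the case where $S$ generates $\Z^d$ as a group, split according to whether the convex cone of $S$ is proper (half-space case, via separation) or all of $\R^d$, and in the latter case extract a positive integer relation $\sum_i a_i s_i = 0$ with $s_i \in S$, $a_i \in \Z_{>0}$, so that $-s_j = (a_j-1)s_j + \sum_{i \neq j} a_i s_i \in S$. That part is sound, including the rationality and positivity bookkeeping you flag as the main obstacle.

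The genuine gap is in the last step. From $-s_j \in S$ for the finitely many positively spanning elements $s_1,\dots,s_k$ you conclude that ``$S$ contains a generating set of $\Z^d$ as a group together with all its inverses,'' hence $S=\Z^d$. But positively spanning $\R^d$ only guarantees that $s_1,\dots,s_k$ generate a full-rank, hence finite-index, subgroup $H'$ of $\Z^d$, not $\Z^d$ itself (already in $\Z^1$ the extracted pair could be $\{2,-2\}$, generating $2\Z$). So at that point you have only proved $H' \subseteq S$ with $[\Z^d:H'] = m < \infty$. One more step is needed, and it is exactly the paper's concluding move: project $S$ to the finite quotient $\Z^d/H'$; a subsemigroup of a torsion group is automatically a subgroup, and since $S$ generates $\Z^d$ its image is all of $\Z^d/H'$, whence $S \supseteq (\text{coset representatives}) + H' = \Z^d$. (Equivalently: for any $s\in S$ one has $-ms \in H' \subseteq S$, so $-s = (m-1)s + (-ms) \in S$, and a subsemigroup closed under inverses is a group.) With that line added your proof is complete; without it, the assertion that the extracted elements group-generate $\Z^d$ is unjustified and in general false.
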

\begin{proof}
Let $S$ be a subgroup of $\Z^d$. If $0$ is not in the convex hull of $S$, then there is a halfspace containing $S$. Otherwise, by Caratheodory's theorem, there are points $x_1, \ldots , x_{d+1}$ and positive numbers $t_1, \ldots , t_{d+1}$ such that $\sum t_i x_i = 0$ and $x_1, \ldots , x_d$ are linearly independent. Thus, $x_{d+1}$ is written as a linear combination of $x_1, \ldots x_d$, using only negative coefficients. This allows us to generate arbitrary linear combinations of $x_1, \ldots, x_d$, so the group $H$ generated by $x_1, \ldots , x_d$ is contained in $S$. Let $\bar{S}$ denote the projection of $S$ to $\Z^d/H$. Because $\Z^d/H$ is torsion, $\bar{S}$ is a subgroup. If $\bar{S} = \Z^d/H$, then $S = \Z^d$. Otherwise, $S$ is contained in a proper subgroup of $\Z^d$.
\end{proof}

\begin{definition}
Let $G$ be a countable group. Denote by $\mathfrak{S}(G)$ the set of subsemi-groups of $G$. Note that $\mathfrak{S}(G)$  is a compact space for the product topology (hence a standard Borel space).  The inverse of some semigroup $H$ is the semigroup consisting of inverses of elements of $H$.
Let $(H_n)$ be a decreasing sequence of $\mathfrak{S}(G)$-valued random variables. We shall say that $(H_n)$ is
\begin{itemize}

\item {\it non-degenerate} if $H_n$ generates $G$ as a subgroup for all $n$ almost surely;
\item {\it Liouville} if the tail $\sigma$-algebra is trivial;
\item {\it symmetric} if for every $n$, and every Borel subset $\Omega\subset \mathfrak{S}(G)$, the events $\{H_n\subset \Omega\}$ and $\{H_n^{-1}\subset \Omega\}$ are equiprobable. \end{itemize}
Clearly, if $(X_n)$ is a $\mu$-random walk with $\mu$ symmetric and non-degenerate, then the sequence $(\bigS_n)$ is symmetric non-degenerate. Moerover if $X_n$ is Liouville, then so is $\bigS_n$.
\end{definition}

\begin{theorem}\label{theorem.Zd} $\Z^d$ is AR for all $d\geq 1$. More generally, every non-degenerate Liouville symmetric decreasing sequence of random semigroups $(H_n)$ of $\Z^d$ is such that $H_n=\Z^d$ a.s. for all $n$.  \end{theorem}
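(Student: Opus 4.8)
The plan is to reduce the general statement (non-degenerate Liouville symmetric decreasing sequences of random semigroups) to controlling two obstructions identified by Lemma~\ref{lemma.Zd_subsemigroups}: a proper subsemigroup of $\Z^d$ lies either in a proper subgroup or in a half-space. So I want to show that almost surely $H_n$ is contained in no proper subgroup and in no half-space; combined with the lemma this forces $H_n = \Z^d$. Non-degeneracy immediately rules out proper subgroups, since $H_n$ generates $\Z^d$ as a group. So the entire content is in ruling out half-spaces: I must show that a non-degenerate, Liouville, symmetric decreasing sequence of random semigroups almost surely is not contained in any closed half-space of $\Z^d$.

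**The half-space argument via Liouville and symmetry.**

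First I would fix $n$ and consider the event $A$ that $H_n$ is contained in some (rational, closed) half-space. Since there are only countably many rational half-spaces through the origin directions, it suffices to bound $\Pr(H_n \subset \{x : \langle x, v\rangle \ge 0\})$ for each fixed primitive $v \in \Z^d$. The key observation is that the event ``$H_m$ is contained in the half-space defined by $v$'' is, up to decreasing in $m$, essentially a tail event: more precisely, consider the decreasing intersection over $m \ge n$. Actually the cleanest route: define $q_n(v) = \Pr(H_n \subset \{ \langle \cdot, v\rangle \ge 0\})$. Because $(H_m)$ is decreasing, $H_n \subset$ half-space implies $H_m \subset$ half-space for all $m \ge n$, so $q_n(v)$ is nondecreasing in $n$ and the event $B_v = \bigcup_n \{H_n \subset \text{half-space}_v\}$ has probability $\lim_n q_n(v)$. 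I would argue $B_v$ is a tail event (it does not change if we discard finitely many ``coordinates''—here, if $H_n \subset$ half-space for some $n$, then since the tail $\sigma$-algebra sees all $H_m$ with $m$ large, and the nested events stabilize, membership in $B_v$ is measurable with respect to the tail). By the Liouville hypothesis, $\Pr(B_v) \in \{0,1\}$. Now symmetry gives $\Pr(B_v) = \Pr(B_{-v})$, because the event $H_n \subset \text{half-space}_v$ corresponds under inversion to $H_n^{-1} \subset \text{half-space}_{-v}$ (negating a half-space through the origin), and symmetry equates these probabilities. If $\Pr(B_v) = 1$ for some $v$, then also $\Pr(B_{-v}) = 1$, so with probability one there are $n_1, n_2$ with $H_{n_1} \subset \text{half-space}_v$ and $H_{n_2} \subset \text{half-space}_{-v}$; taking $n = \max(n_1,n_2)$ and using monotonicity, $H_n$ lies in the intersection of the two opposite closed half-spaces, i.e.\ in the hyperplane $\{\langle \cdot, v\rangle = 0\}$, a proper subgroup—contradicting non-degeneracy. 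Hence $\Pr(B_v) = 0$ for every $v$, and summing over the countably many primitive directions, almost surely $H_n$ is in no half-space for any $n$.

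**Conclusion and the deduction for $\Z^d$.**

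Combining: almost surely, for all $n$, $H_n$ is contained in no proper subgroup (non-degeneracy) and in no half-space (previous paragraph), so by Lemma~\ref{lemma.Zd_subsemigroups} $H_n = \Z^d$ for all $n$. For the first sentence of the theorem, I would note that a symmetric generating measure $\mu$ gives, via the remark preceding the theorem, a symmetric non-degenerate decreasing sequence $(\bigS_n)$; the only missing ingredient is that it be Liouville. Here one must be slightly careful—the theorem as I would prove it is cleanest when restricted to Liouville random walks, but for $\Z^d$ every symmetric random walk with finite support... actually $\Z^d$ is Liouville for \emph{every} probability measure (abelian groups are Liouville, by the entropy criterion / Choquet--Deny), so $(\bigS_n)$ is automatically Liouville and the general statement applies to give $\bigS_n = \Z^d$, i.e.\ $\Z^d$ is AR.

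**Main obstacle.**

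The step I expect to require the most care is justifying that $B_v$ (or an appropriate variant) is genuinely a tail event, so that the Liouville hypothesis can be invoked. The subtlety is that ``$H_n \subset \text{half-space}_v$'' for a \emph{fixed} $n$ is not tail-measurable, and one needs the decreasing/nested structure together with monotonicity in $n$ to see that the union over $n$ is determined by the tail $\sigma$-algebra (one should show $\{H_n \subset \text{half-space}_v \text{ eventually}\} = \{H_n \subset \text{half-space}_v \text{ for some } n\}$ up to null sets, using that $(H_n)$ is decreasing, and that ``eventually'' statements are tail). Getting the measure-theoretic bookkeeping right—and correctly matching the inversion symmetry of semigroups with the reflection $v \mapsto -v$ of half-spaces through the origin—is the crux; everything else is a routine application of Lemma~\ref{lemma.Zd_subsemigroups} and Caratheodory-type geometry already done there.
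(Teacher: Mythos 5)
Your overall architecture (reduce via Lemma~\ref{lemma.Zd_subsemigroups}, kill proper subgroups by non-degeneracy, kill half-spaces by a Liouville/symmetry argument) is the same as the paper's, and your per-direction argument is sound: for a \emph{fixed} unit vector $v$, writing $\mathrm{HS}_v=\{x:\langle x,v\rangle\ge 0\}$, the event $B_v=\bigcup_n\{H_n\subset \mathrm{HS}_v\}$ is indeed tail-measurable (the events increase in $n$ by monotonicity of $(H_n)$, so $B_v=\bigcup_{n\ge m}\{H_n\subset \mathrm{HS}_v\}$ for every $m$), symmetry gives $\Pr(B_v)=\Pr(B_{-v})$, and $\Pr(B_v)=1$ would trap $H_n$ in the hyperplane $v^{\perp}$, contradicting non-degeneracy. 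The genuine gap is the very first step of the half-space argument: it does \emph{not} suffice to consider the countably many rational directions. A subsemigroup of $\Z^d$ can lie in a closed half-space with irrational normal without lying in any rational half-space and without lying in a proper subgroup. For instance, with $\alpha=\sqrt{2}$, the set $S=\{(a,b)\in\Z^2:\, a\alpha+b\ge 0\}$ is a subsemigroup that generates $\Z^2$ as a group; for any primitive $(p,q)\in\Z^2$ one finds points of $S$ arbitrarily close to the boundary line with $a$ of either sign, and since $p-q\alpha\ne 0$ some of these satisfy $pa+qb<0$, so $S$ is contained in no rational half-space. Consequently, proving $\Pr(B_v)=0$ for each $v$ in a countable family (or even for every fixed $v$, which your argument does achieve) does not rule out the event that $H_n$ is trapped in \emph{some} half-space whose direction is random and irrational; the union over the uncountably many $v$ is exactly what is missing.

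The paper closes this gap in Lemma~\ref{Lem:half-space} by dualizing: rather than the (uncountable) family of half-spaces containing $H_n$, it studies the closed limit set $A\subset S^{d-1}$ of the radial projections of $H_n$. Compactness makes $A$ nonempty, the Liouville property makes $A$ a.s.\ equal to a single deterministic closed set, and symmetry makes $A=-A$, so $A$ contains an antipodal pair $\{x,-x\}$. Any half-space trapping $H_n$ must then have both $x$ and $-x$ on its boundary hyperplane, and projecting along $x$ reduces the dimension by one, allowing an induction. You would need this device (or some other way of handling all directions simultaneously, e.g.\ showing that the random closed set of trapping directions is itself deterministic and symmetric) to complete your proof. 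Your final remark that Choquet--Deny makes the Liouville hypothesis automatic for $\Z^d$, yielding the first sentence of the theorem from the second, is correct and matches the paper's intent.
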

\begin{proof}
The proof immediately follows from Lemma \ref{lemma.Zd_subsemigroups} together with the following lemma. \end{proof}
\begin{lemma}\label{Lem:half-space}
Let $G$ be a countable subgroup of $\R^d$ and let $(H_n)$ be some Liouville symmetric decreasing sequence of random semigroups of $G$, such that for all $n$, $H_n$ generates $\R^d$ as a vector space. Then a.s. $H_n$ does not eventually get trapped  in a closed half-space of $\R^d$.
\end{lemma}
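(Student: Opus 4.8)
The plan is to argue by contradiction using the triviality of the tail $\sigma$-algebra together with the symmetry of the sequence. Suppose that with positive probability the sequence $(H_n)$ eventually gets trapped in a closed half-space. The event ``$(H_n)$ is eventually trapped in \emph{some} closed half-space'' is a tail event: whether it happens does not depend on finitely many of the $H_n$, since $(H_n)$ is decreasing, and being trapped is a property that only constrains the limit behaviour. Hence by the Liouville hypothesis this event has probability $0$ or $1$; under our assumption it has probability $1$. The idea is now to extract, from this almost-sure trapping, a \emph{deterministic} direction (or at least a direction measurable with respect to the tail), and then use symmetry to derive a contradiction with the non-degeneracy hypothesis that $H_n$ spans $\R^d$.

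To make this precise I would introduce, for each $n$, the closed convex cone $C_n = \overline{\mathrm{cone}}(H_n) \subseteq \R^d$ generated by $H_n$; this is a decreasing sequence of closed convex cones, so it has a limit $C_\infty = \bigcap_n C_n$, which is a closed convex cone and is measurable with respect to the tail $\sigma$-algebra. Being eventually trapped in a closed half-space is equivalent to $C_\infty$ being contained in a closed half-space, i.e.\ to $C_\infty \neq \R^d$ (for a convex cone, not spanning a half-space is the same as being all of $\R^d$). By triviality of the tail $\sigma$-algebra, $C_\infty$ is almost surely equal to a fixed deterministic closed convex cone $C$, and under our assumption $C \neq \R^d$, so $C$ lies in some closed half-space $\{x : \langle u, x\rangle \le 0\}$ with $u \neq 0$. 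Now I apply symmetry: the cone generated by $H_n^{-1}$ is $-C_n$, so the limit cone for the sequence $(H_n^{-1})$ is $-C_\infty$; but $(H_n^{-1})$ has the same law as $(H_n)$ as a sequence of random semigroups (here one must check that $H\mapsto \overline{\mathrm{cone}}(H)$ and $H \mapsto -\overline{\mathrm{cone}}(H)$ are Borel so that the symmetry hypothesis, stated for Borel subsets of $\mathfrak{S}(G)$, transfers), so $-C_\infty$ also equals $C$ almost surely. Hence $C = -C$, a convex cone symmetric under $x \mapsto -x$; such a cone is a linear subspace. Since $C \neq \R^d$ it is a proper subspace, and then $H_n \subseteq C_n \subseteq \overline{\mathrm{cone}}(H_1) $ forces $H_n$ to lie in a proper affine... more directly, $H_n \subseteq C_n$ and $C_n \downarrow C$ a proper subspace does not immediately confine $H_n$ to $C$; so instead I would run the symmetry argument at the level of the half-space itself: a.s.\ there is a tail-measurable unit vector $u$ with $\langle u, h\rangle \le 0$ for all $h$ in $H_n$ for $n$ large, tail-triviality makes $u$ deterministic, symmetry gives $\langle -u, h\rangle \le 0$ as well for $n$ large, hence $\langle u, h \rangle = 0$ eventually, contradicting that $H_n$ spans $\R^d$ as a vector space.

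The main obstacle, and the step I would spend the most care on, is the measurability/parametrization issue needed to make ``eventually trapped in some half-space'' genuinely a tail event and to select the normal vector $u$ in a tail-measurable way: the collection of closed half-spaces is a nice (second-countable) space, but one has to argue that $\omega \mapsto (\text{set of } u \text{ that eventually work})$ is a Borel map into closed subsets of the sphere and that this set is tail-measurable because the sequence $H_n$ is decreasing (so ``$H_n \subseteq \{ \langle u,\cdot\rangle \le 0\}$ for all large $n$'' is unchanged by altering finitely many terms). A convenient way to sidestep picking a canonical $u$ is to work with $C_\infty$ as above, prove $C_\infty$ is tail-measurable hence a.s.\ a fixed cone $C$, prove $C = -C$ by symmetry, conclude $C$ is a subspace; then observe that the event ``$C_\infty$ is a proper subspace'' forces, for each $n$, $H_n \subseteq C_n$, and since $C_n \to C$ one needs a short compactness argument (or a direct argument: if $h \in H_n$ for some fixed $n$ then $h \in C_m$ for all $m \ge n$... but that is false in general). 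Given this subtlety, I expect the cleanest route is actually the half-space formulation with a careful Borel-selection lemma for the normal direction, and I would state and prove that selection lemma explicitly rather than gesture at it.
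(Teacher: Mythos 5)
Your overall strategy (tail-triviality to extract a deterministic object, then symmetry to force that object to be antipodally symmetric, then contradict spanning) is the right one, and your observation that a symmetric deterministic closed convex cone must be a linear subspace is correct. But both of the concrete routes you propose stall at the decisive step, and you say so yourself without resolving either. The cone route stalls because, as you note, $C_\infty=\bigcap_n C_n$ being a proper subspace $W$ does not immediately contradict the hypothesis that each $H_n$ spans $\R^d$. This gap is actually fillable, but it needs a compactness argument you do not supply: since $C_n\supseteq C_\infty=W$ and a closed convex cone containing a subspace $W$ is $W$-invariant, one can write $C_n=\pi_W^{-1}(D_n)$ with $D_n$ a nonzero closed convex cone in $\R^d/W$; the sets $D_n\cap S$ ($S$ the unit sphere of the quotient) are nonempty, compact and decreasing, so their intersection is nonempty, whence $\bigcap_n C_n\supsetneq W$ --- a contradiction. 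Without some such argument the cone route proves nothing.

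The half-space route, which you declare to be ``the cleanest,'' has a more serious problem than a routine Borel-selection lemma. The set $U(\omega)$ of unit normals $u$ for which $H_n$ is eventually contained in $\{\langle u,\cdot\rangle\le 0\}$ is an \emph{increasing union} of closed sets $U_n(\omega)$, and the individual $U_n$ are not tail-measurable (they depend on $H_n$ itself). Taking the closure $\overline{U}$ does give a tail-measurable random closed set, hence an a.s.\ deterministic one, but a point of $\overline{U}$ need not be a trapping direction: a limit of normals $u_k$ trapping $H_{n_k}$ with $n_k\to\infty$ need not trap any $H_n$. So the deterministic $u$ you need may simply fail to exist, and ``tail-triviality makes $u$ deterministic'' is not justified. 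The paper avoids this entirely by running the compactness on the other side: it takes $A_n$ to be the closure of the radial projection of $H_n$ to $S^{d-1}$; these are nonempty compact and \emph{decreasing}, so $A=\bigcap_n A_n$ is nonempty, tail-measurable (hence a.s.\ a fixed set $T$) and symmetric, giving an antipodal pair $x,-x\in A$. Any half-space eventually containing $H_n$ must then have both $x$ and $-x$ in its closure, i.e.\ its boundary hyperplane contains $x$; projecting to $x^\perp$ and inducting on $d$ finishes the proof. In short: the missing idea is to apply compactness to a \emph{decreasing} family of nonempty compacta (limit directions of $H_n$, or equivalently the spheres of the quotient cones), rather than trying to select a normal vector from an increasing, non-closed family.
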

\begin{proof} We will prove the lemma by induction on $d$, the case $d=0$ being trivial.
Let $A_n$ the closure of  the radial projection of $H_n$ to the sphere $S^{d-1}$ in $\R^d$. By compactness of the sphere, the intersection of the $A_n$'s is a non-empty closed subset $A \subset S^{n-1}$. \\

\noindent {\em Claim: $A$ is a deterministic set; i.e. there exists a set $T \subset S^{n-1}$ such that $\Pr(A = T) = 1$. Moreover, $A = -A$ almost surely.}\\
The symmetry of $A$ follows from the symmetry of $H_n$. To show that $A$ is deterministic, we use the Liouville property of $(H_n)$: $A$ depending only on the tail of $H_n$, any event depending only on $A$ has probability 0 or 1. But the only probability measure on closed sets that satisfies this property is the  Dirac measure; in particular, there is one closed set $T$ such that $P(A=T) = 1.$ \\

By the Claim, there is a pair of points $x, -x$ in $S^{d-1}$ that are almost surely contained in $A$. Thus,  $H_n$ is almost surely not contained in any halfspace that does not contain $x$ and $-x$. Let $\pi$ denote the projection onto the hyperplane orthogonal to $x$. If $H_n$ is eventually contained in halfspace containing $x$ and $-x$, then $\pi(H_n)$ must be contained in a halfspace of $\R^{d-1}$. But the projection $\pi(H_n)$ generates $\R^{d-1}$ as a vector space (and is obviously Liouville and symmetric). Hence the lemma follows by induction on the dimension.
\end{proof}

\begin{remark}
The level of generality of Theorem \ref{theorem.Zd} will be needed for the proof of the polycylic case (see Theorem \ref{thm:polycyclic}). \end{remark}

\subsection{Finitely generated nilpotent groups}
To prove algebraic recurrence of finitely generated nilpotent groups, we will need the following lemma.

\begin{lemma}\label{lemma.nilpotent_semigroups} Let $S$ be a subsemigroup of a torsion-free nilpotent group $N$, and let $\bar{S}$ be the projection of $S$ to $N/[N,N]$. Then $S = N$ if and only if $\bar{S} = N/[N,N]$. \end{lemma}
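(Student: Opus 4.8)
The forward implication is trivial, so the content is the converse: assuming $\bar S=N/[N,N]$, prove $S=N$ (I take $N$ finitely generated, as in our setting, so that $N/[N,N]\cong\Z^d$ and $\gamma_c(N)$ is a finitely generated abelian group). The plan is an induction on the nilpotency class $c$ of $N$; when $c\le 1$ the group is abelian and $\bar S=S$, so there is nothing to prove.

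For the inductive step, let $Z$ be the isolator in $N$ of $\gamma_c(N)$, the last nontrivial term of the lower central series. Since $N$ is torsion-free nilpotent, $Z$ is a central subgroup with $Z\cong\Z^m$ for some $m\geq 1$, and $Z\subseteq[N,N]$ because $N/[N,N]$ is torsion-free. Hence $N/Z$ is torsion-free, has nilpotency class strictly less than $c$, and has the same abelianization as $N$; so the image of $S$ in $N/Z$ is a subsemigroup surjecting onto $(N/Z)/[N/Z,N/Z]$, and by the inductive hypothesis it equals $N/Z$, i.e. $SZ=N$. It now suffices to prove $Z\subseteq S$: for then every $g\in N$ can be written $g=sz$ with $s\in S$ and $z\in Z\subseteq S$, whence $g\in S\cdot S\subseteq S$ and $S=N$.

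To obtain $Z\subseteq S$, observe that $T:=S\cap Z$ is a subsemigroup of $Z\cong\Z^m$, so by Lemma~\ref{lemma.Zd_subsemigroups} it is enough to show that $T$ lies in no proper subgroup and in no closed half-space of $Z$. Here I would exploit two facts. First, since $\bar S=N/[N,N]\cong\Z^d$, for every $v\in\Z^d$ and every $k\geq 1$ there is an element of $S$ lying over $kv$. Second, since $SZ=N$ and $Z$ is central, every left-normed weight-$c$ commutator $[g_1,\dots,g_c]$ with $g_i\in N$ equals $[s_1,\dots,s_c]$ for suitable $s_i\in S$, and — because $\gamma_{c+1}(N)=1$ — the map $(\Z^d)^c\to Z$ sending $(\bar g_1,\dots,\bar g_c)$ to $[g_1,\dots,g_c]$ is genuinely multilinear, with image generating $Z$. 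Trading each $s_i$ for an element of $S$ lying over a large multiple of $\bar s_i$, and each inverse $s_i^{-1}$ that occurs when the commutator is spelled out for an element of $S$ lying over $-\bar s_i$, one builds positive words in $S$ whose image in $N/[N,N]$ vanishes; if one can arrange that they in fact lie in $Z$, then by multilinearity they realize the generators of $Z$ scaled by arbitrarily large integers in every direction, which forces $T$ to generate $Z$ as a group and to meet every half-space, closing the induction. (Equivalently, the whole statement amounts to showing that $S$ is closed under inversion, since $S$ generates $N$ as a group; but I expect the approach above, through $Z$ and Lemma~\ref{lemma.Zd_subsemigroups}, to be the workable one.)

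The main obstacle — and, I expect, the bulk of the work — is the parenthetical "if one can arrange that they in fact lie in $Z$". A product of elements of $S$ whose image in $N/[N,N]$ vanishes only lands in $[N,N]$, not automatically in $Z=\gamma_c(N)$ (the relevant obstruction lives in the nonabelian quotient $[N,N]/Z$); and when a genuine weight-$c$ commutator is rewritten as a positive word in $S$ by replacing inverses as above, one picks up error terms lying in $[N,N]$ but a priori outside $Z$. Controlling these errors is where the argument must be organized carefully: the natural device is a secondary, downward induction along the lower central series in which, when handling $\gamma_i(N)$, one already knows $\gamma_{i+1}(N),\dots,\gamma_c(N)\subseteq S$, so that the errors (which after the appropriate commutator collection live in those deeper terms) are absorbed; the inverses of the commutator generators are handled in the same step via the Jacobi/Hall–Witt identity modulo $\gamma_{i+1}(N)$. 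It is this combination — the exact multilinearity granted by $\gamma_{c+1}(N)=1$, the availability of elements of $S$ over arbitrarily large multiples, and the error-absorbing inner induction — rather than any single short identity, that I expect to carry the proof.
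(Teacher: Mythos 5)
Your reduction is sound up to the point you flag yourself: induction on the class, $Z$ the isolator of $\gamma_c(N)$, the inductive hypothesis giving $SZ=N$, and the observation that it remains to put $Z$ inside $S$. But the step you defer is the entire content of the lemma, and the sketch of it does not close. Concretely: a positive word in elements of $S$ whose abelianized image vanishes lies in $[N,N]$, and nothing in your construction forces it into $Z$; the obstruction lives in $[N,N]/Z$, which is precisely the part of the group over which neither induction gives you control. The proposed downward induction along the lower central series does not repair this, because already at its first step ($i=c$) the error terms produced by trading $s_j^{-1}$ for an element of $S$ over $-\bar s_j$ lie in $\gamma_2(N)$, not in $\gamma_{c+1}(N)=1$ --- there is no ``deeper term'' available to absorb them, and the claim that collection pushes them below $\gamma_c$ is exactly what is unproven. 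A secondary issue: even if for each weight-$c$ commutator $z$ you produced $z^{k}$ and $z^{-\ell}$ in $S$ with $k,\ell>0$, this only places a finite-index subgroup of $Z$ inside $S\cap Z$, and Lemma~\ref{lemma.Zd_subsemigroups} does not exclude $S\cap Z$ being a proper finite-index subgroup; some extra argument is still needed there.

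The paper's proof shows that the obstacle you identify can be engineered away rather than overcome. It inducts so that the target at each stage is a single \emph{cyclic} central subgroup $Z=\langle z\rangle$ with $z=[a,b]$ one commutator, $a$ taken deep enough in the lower central series that $z$ is central. The inductive hypothesis, applied to $N/Z$, hands you honest elements of $S$ in the four cosets $aZ$, $bZ$, $a^{-1}Z$, $b^{-1}Z$, say $c=az^{k_1}$, $d=bz^{k_2}$, $e=a^{-1}z^{k_3}$, $f=b^{-1}z^{k_4}$. Because $z$ is central, the products $c^nd^me^nf^m$ and $d^mc^nf^me^n$ collapse \emph{exactly} to $z^{nm+L(n,m)}$ and $z^{-nm+L(n,m)}$ with $L$ linear: there are no error terms to absorb, no multilinearity ``modulo'' anything, and no appeal to Lemma~\ref{lemma.Zd_subsemigroups}. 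Taking $n,m$ large yields a positive and a negative power of $z$ in $S$, hence a cyclic subgroup of $Z$ inside $S$, and the induction over $N$ modulo cyclic central subgroups finishes. If you want to salvage your version, the fix is the same idea: work one central commutator at a time, using exact coset representatives from $S$ rather than approximations built from large multiples.
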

\begin{proof}
Let $S$ be a subsemigroup of $N$ that projects to all of $N/[N,N]$.  Let $1 = N_r \triangleleft N_{r-1} \triangleleft \cdots \triangleleft N_1 \triangleleft N_0 = N$ be the lower central series of $N$, and suppose by induction that the lemma holds for $N/Z$ for every cyclic subgroup $Z$ of $N_{r-1}$. For each such $Z$, $S$ projects to all of $(N/Z)/[(N/Z),(N/Z)]$, and so by induction it projects to all of $N/Z$. So to show $N \subset S$, it suffices to find a cyclic subgroup $Z \subset N_{r-1}$ such that $Z \subset S$.

Let $Z$ be an arbitrary cyclic subgroup of $N_{r-1}$, an let $z$ be a generator of $Z$. There are $a$ and $b$ in $N$ such that $[a,b] = z$, and by induction $S$ contains representatives of each coset of $Z'$, so there are $k_1, \ldots , k_4$ such that $c = az^{k_1}$, $d = bz^{k_2}$, $e = a^{-1}z^{k_3}$ and $f = b^{-1} z^{k_4}$ are all in $S$. A simple calculation shows that $c^nd^me^nf^m = z^{nm+L(n,m)}$, where $L(n,m) = (k_1+k_3)n + (k_2+k_4)m$, and $d^mc^nf^me^n = z^{-nm + L(n,m)}$. Letting $n$ and $m$ be large, we get that $z^k$ and $z^\ell$ are both in $S$, for some $k>0$ and $\ell < 0$. Together, $z^k$ and $z^\ell$ generate a cyclic subgroup $Z'$ of $Z$ which is contained in $S$.
\end{proof}

\begin{theorem} Every finitely generated nilpotent group $N$ is AR. \end{theorem}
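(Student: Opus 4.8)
The plan is to reduce the general finitely generated nilpotent case to the torsion-free case, and then to reduce the torsion-free case to the abelianization, where Theorem~\ref{theorem.Zd} applies. For the first reduction: if $N$ is finitely generated nilpotent, its torsion elements form a finite normal subgroup $T \triangleleft N$ (a standard fact for f.g.\ nilpotent groups), and $N/T$ is torsion-free nilpotent. Since $T$ is torsion, Theorem~\ref{theorem.torsion} tells us $N$ is AR if and only if $N/T$ is AR. So it suffices to prove the theorem when $N$ is torsion-free nilpotent.

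For the torsion-free case: let $\mu$ be a symmetric measure on $N$ with $\langle \mathrm{supp}(\mu)\rangle = N$, let $(X_1,X_2,\ldots)$ be a $\mu$-random walk, and let $\bigS_n$ be the associated semigroup. By Lemma~\ref{lemma.nilpotent_semigroups}, to show $\bigS_n = N$ almost surely it suffices to show that the projected semigroup $\bar{\bigS}_n$ equals $N/[N,N]$ almost surely. But $\bar{\bigS}_n$ is exactly the semigroup generated by the tail of the projected random walk $(\pi(X_1),\pi(X_2),\ldots)$ on the abelianization $N/[N,N]$; this is again a random walk with symmetric step distribution $\pi_*\mu$, and since $\mathrm{supp}(\mu)$ generates $N$ as a group, $\mathrm{supp}(\pi_*\mu)$ generates $N/[N,N]$ as a group. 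Now $N/[N,N]$ is a finitely generated abelian group, hence of the form $\Z^d \times F$ with $F$ finite. A finite group is trivially AR (it is torsion, or one applies Theorem~\ref{theorem.torsion} with $H = F$), and $\Z^d$ is AR by Theorem~\ref{theorem.Zd}; combining these via Theorem~\ref{theorem.torsion} once more (with the finite torsion subgroup $F \triangleleft \Z^d \times F$, whose quotient is $\Z^d$) shows $N/[N,N]$ is AR. Hence $\bar{\bigS}_n = N/[N,N]$ almost surely, and Lemma~\ref{lemma.nilpotent_semigroups} finishes the torsion-free case.

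The only place requiring care is ensuring the hypotheses of the cited results are genuinely met at each stage. In particular, when we project to the abelianization and further mod out by the finite factor $F$, we need each projected step measure to remain symmetric and to generate the relevant quotient as a group — both are immediate since pushing forward under a surjective homomorphism preserves symmetry and sends a generating set to a generating set. I would also note explicitly that Lemma~\ref{lemma.nilpotent_semigroups} requires $N$ torsion-free, which is exactly why the first reduction through $N/T$ is performed before invoking it; this is the main structural point of the argument rather than a computational obstacle. There is essentially no hard step here — the theorem is a clean assembly of Theorem~\ref{theorem.torsion}, Theorem~\ref{theorem.Zd}, and Lemma~\ref{lemma.nilpotent_semigroups} — so the write-up is short.
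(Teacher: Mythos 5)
Your proposal is correct and follows essentially the same route as the paper: reduce to the torsion-free case via Theorem~\ref{theorem.torsion}, project to the abelianization and handle its finite torsion part again via Theorem~\ref{theorem.torsion} so that Theorem~\ref{theorem.Zd} applies, then lift back using Lemma~\ref{lemma.nilpotent_semigroups}. The only difference is that you spell out the routine verifications (symmetry and generation of the pushed-forward measure) that the paper leaves implicit.
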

\begin{proof} By Theorem~\ref{theorem.torsion}, we can assume that $N$ is torsion-free. Given a random walk $(X_1, X_2, \ldots)$ on $N$, let $(\bar{X_1}, \bar{X_2}, \ldots)$ denote the projection onto $N/[N,N]$. The semigroup generated by $\{\bar{X_n}, \bar{X_{n+1}}, \ldots\}$ is the projection $\bar{\bigS_n}$ of $\bigS_n$. By Theorem~\ref{theorem.torsion}, we can assume $N/[N,N] \cong \Z^d$, and so $N/[N,N]$ is AR, so $\bar{\bigS_n} = N/[N,N]$ almost surely. By Lemma~\ref{lemma.nilpotent_semigroups}, $\bigS_n = N$ almost surely.
\end{proof}

\subsection{Polycyclic groups}
To prove that finitely generated nilpotent groups are AR, we crucially used the fact that every symmetric random walk is Liouville. This is unknown for polycyclic groups. However one has
\begin{theorem}\label{thm:polycyclic}
Every Liouville symmetric non-degenerate random walk on a virtually polycyclic group is AR.
\end{theorem}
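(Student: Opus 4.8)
The plan is to reduce the polycyclic case to Theorem~\ref{theorem.Zd} by induction along a subnormal series with cyclic quotients, using the machinery of random semigroups introduced before Theorem~\ref{theorem.Zd}. First I would reduce to the polycyclic case proper (rather than virtually polycyclic): if $G$ has a polycyclic subgroup of finite index, one passes to a further finite-index subgroup that is \emph{normal} in $G$, and handles the finite quotient by an argument in the spirit of Theorem~\ref{theorem.torsion} — the obstruction being that a finite group need not be torsion-\emph{able} to be absorbed, but finiteness should suffice since for any $X_i$ and any representative $Y_i \in \bigS_n$ of $\pi(X_i)^{-1}$ in the finite quotient, $X_i Y_i$ lies in the finite normal subgroup and hence has finite order, so $X_i^{-1} = Y_i (X_iY_i)^{k-1} \in \bigS_n$; then Lemma~\ref{lemma.inverses} applies. (One must check that $\bigS_n$ surjects onto the finite quotient a.s., which follows from non-degeneracy plus the Liouville/tail-triviality argument exactly as in the Claim inside Lemma~\ref{Lem:half-space}: the image of $\bigS_n$ in the finite quotient is an eventually-constant, tail-measurable, symmetric subsemigroup, hence a.s.\ a fixed subgroup, hence a.s.\ everything.)

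With $G$ polycyclic, fix a subnormal series $1 = G_r \triangleleft G_{r-1} \triangleleft \cdots \triangleleft G_1 \triangleleft G_0 = G$ with each $G_{i}/G_{i+1}$ infinite cyclic or finite. I would argue by induction on $r$ that every Liouville symmetric non-degenerate decreasing sequence of random semigroups $(H_n)$ of $G$ with $\langle H_n \rangle = G$ a.s.\ satisfies $H_n = G$ a.s. Let $A = G_{r-1}$, an abelian normal subgroup isomorphic to $\Z^d$ (after absorbing torsion), and let $\pi : G \to G/A$ be the projection. The projected sequence $(\pi(H_n))$ is again Liouville, symmetric, non-degenerate over $G/A$, which is polycyclic of shorter length, so by induction $\pi(H_n) = G/A$ a.s. It remains to show that $H_n \supseteq A$ a.s.; combined with surjectivity onto $G/A$ this gives $H_n = G$.

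To get $A \subseteq H_n$: for each $n$, consider $H_n \cap A$. This is a subsemigroup of $A \cong \Z^d$, but it need not be all of $A$ directly, so the real content is to show $H_n$ \emph{generates} $A$ using elements of $H_n$ lying outside $A$ as well, in the spirit of the commutator computation in Lemma~\ref{lemma.nilpotent_semigroups} but without nilpotence. Here is where I expect the main obstacle. The clean route is: since $\pi(H_n) = G/A$, for each $g \in G$ there is a representative in $H_n$; conjugation by such representatives acts on $A$ through the (integer matrix) action of $G/A$ on $A \otimes \R \cong \R^d$, and products of the form (representative of $g$)(representative of $g^{-1}$) land in $A$. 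One wants to produce, inside $H_n \cap A$, a symmetric Liouville non-degenerate-over-$\R^d$ sub-collection — i.e.\ to verify that the random semigroup $K_n := \langle \text{stuff we can extract} \rangle \subseteq A$ inherits all three hypotheses of Lemma~\ref{Lem:half-space}. Non-degeneracy over $\R^d$ is the crux: one must rule out that all extractable elements of $A$ lie in a proper subspace, which would happen only if the $G/A$-action together with $H_n$ conspired to confine things — and this is precisely excluded by $\langle H_n\rangle = G$ together with the fact that $A$ has no proper $G$-invariant subgroup of full rank issues being handled by passing to a refinement of the series. Granting that $K_n$ satisfies the hypotheses of Lemma~\ref{Lem:half-space}, that lemma (applied with $G = A \hookrightarrow \R^d$) says $K_n$ is not eventually trapped in a half-space; combined with Lemma~\ref{lemma.Zd_subsemigroups} and the eventual-constancy/tail-triviality argument, $K_n$ is eventually not contained in any proper subgroup \emph{or} half-space of $A$, hence $K_n = A$ eventually, hence (by decreasing-ness and a.s.\ stabilization) $A \subseteq H_n$ for all $n$. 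The delicate points to nail down are (i) the precise extraction of symmetric, Liouville elements of $A$ from products of representatives — symmetry of $(H_n)$ must be shown to descend to the extracted family, which is why one works with the abstract random-semigroup formulation rather than with $\mu$ directly — and (ii) handling the case where $G/A$ acts on $A$ with eigenvalues off the unit circle, where "adding large multiples of $x$" interacts with conjugation; I would address (ii) by choosing the subnormal series so that $A$ is a \emph{minimal} nontrivial term, making $A \otimes \R$ an irreducible or at least controlled $G/A$-module, and reduce to the abelian lemma on the span.
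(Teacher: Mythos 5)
Your high-level plan (reduce to the abelian normal subgroup and invoke Theorem~\ref{theorem.Zd} via the random-semigroup formalism) is in the right spirit, but two of your steps have genuine gaps, and the second one is precisely where the paper's main idea lives. First, the reduction from virtually polycyclic to polycyclic is broken as written: if $N\triangleleft G$ is a finite-\emph{index} normal polycyclic subgroup and $Y_i\in\bigS_n$ represents $\pi(X_i)^{-1}$ in the finite quotient $G/N$, then $X_iY_i$ lies in $N$, which is infinite, so there is no reason for it to have finite order; the computation $X_i^{-1}=Y_i(X_iY_i)^{k-1}$ from Theorem~\ref{theorem.torsion} only works for a torsion normal \emph{subgroup}, not a finite \emph{quotient}. (Indeed, if your argument worked it would show that AR passes from any finite quotient to $G$, which would make the whole theorem nearly trivial.) The paper handles this step by a different mechanism, Lemma~\ref{lem:recAR}: project the walk to the finite quotient, use recurrence of that projection, and run the induced (harmonic-measure) walk on the finite-index subgroup.

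Second, the non-degeneracy of the random semigroup you obtain inside the abelian normal subgroup $A$ is exactly the crux, and it is the step you explicitly ``grant.'' Your proposed justification (refining the series, invariant subspaces, minimality of $A$) is not an argument. The paper's route is different and avoids your module-theoretic difficulties entirely: after passing to a finite-index subgroup, a polycyclic group is (f.g.\ nilpotent)-by-abelian, so one takes $A=[G,G]$, collapses it to $\Z^k$ using Lemma~\ref{lemma.nilpotent_semigroups} and Theorem~\ref{theorem.torsion}, and sets $H_n=\bigS_n\cap[G,G]$ directly (no extraction of a sub-collection is needed; this intersection is automatically a symmetric Liouville decreasing sequence of random semigroups). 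Non-degeneracy is then proved by a congruence trick: for each $m$ the subgroup $N_m$ of $m$-th powers of elements of $[G,G]$ is normal in $G$ with $[G,G]/N_m$ finite torsion, so Theorem~\ref{theorem.torsion} gives $\pi_m(\bigS_n)=G/N_m$ a.s., hence $\pi_m(H_n)=[G,G]/N_m$ for every $m$; since every proper subgroup of $\Z^k$ is killed by some surjection $\Z^k\to\Z/m\Z$, $H_n$ must generate all of $[G,G]$. Without this (or some substitute), your induction does not close. Note also that your inductive hypothesis quantifies over arbitrary random semigroups on polycyclic groups, which is stronger than what the paper proves and not obviously available, since Theorem~\ref{theorem.torsion} and Lemma~\ref{lemma.inverses} are stated for random walks.
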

\begin{proof}
Let us start with an easy lemma
\begin{lemma}\label{lem:recAR}
To prove that a group $G$ is AR, it suffices to show that any finite index subgroup is AR. Moreover, the same holds if one restricts to Liouville random walks.
\end{lemma}
\begin{proof} If $H$ is a subgroup of $G$, then a $\mu$-random walk on $G$ can be projected to a random walk on $G/H$. If this random walk is recurrent, then we can define the harmonic measure $\mu_H$ on $H$. If $G/H$ is $\mu$-recurrent and $(H, \mu_H)$ is AR, then $(G,\mu)$ is AR. Indeed, the intersection of a $\mu$-random walk on $G$ with $H$ is a $\mu_H$-random walk on $H$ (which is Liouville if the latter is). Because $(H,\mu_H)$ is AR, we must have $H \subset \bigS_n$ almost surely. But by the recurrence of $G/H$, there is a representative of each coset of $H$ in $\bigS_n$. Thus, $G$ is in $\bigS_n$. As a special case of this fact, we see that if $H$ is a finite index subgroup of $G$ and $H$ is AR, then $G$ is AR.
\end{proof}
Now let us turn to the proof of Theorem \ref{thm:polycyclic}. Up to passing to a finite index subgroup, we can assume that $G$ is (finitely generated nipotent)-by-abelian. Let $(X_n)$ be a symmetric non-degenerate Liouville random walk on $G$ and let $(\bigS_n)$ be the corresponding sequence of semigroups. Since $G/[G,G]$ is AR, it is enough to prove that a.s.\ $H_n:=\bigS_n\cap [G,G]= [G,G]$. On the other hand, $[G,G]$ being nilpotent, up to dividing by the derived subgroup of $[G,G]$ (which is normal in $G$), one can assume that $[G,G]$ is abelian:\ this indeed follows from  Lemma \ref{lemma.nilpotent_semigroups}.

Up to dividing $G$ by a finite normal subgroup, and applying Theorem \ref{theorem.torsion}, one can assume that $[G,G]$ is torsion-free, hence isomorphic to $\Z^k$. Clearly $H_n$ is a Liouville symmetric decreasing sequence of random semigroups of $\Z^k$, so in order to apply Theorem \ref{theorem.Zd}, it is enough to show that $H_n$ is non-degenerate: this will end the proof of Theorem \ref{thm:polycyclic}.\\

\noindent {\em Claim: $H_n$ a.s.\ generates $[G,G]$ as a subgroup.}
\\ Observe that for every integer $m\in \N$, the subgroup $N_m$ of $m$-powers of elements of $[G,G]$ is normal in $G$ and let $\pi_m$ be the projection of $G$ to $G/N_m$. It follows from Theorem \ref{theorem.torsion} that a.s.\ $\pi_m(\bigS_n)=G/N_m$, and so $\pi_m(H_n)=[G,G]/N_m$. Recall that every proper subgroup of $\Z^k$ sits inside the kernel of some surjective morphism $\Z^k\to \Z/m\Z$. In particular such subgroup does not surject to  $G/N_m$ for the corresponding $m$. Put together, these two facts imply the claim, so the theorem.
\end{proof}

\section{The free group}\label{section:nonAR}

An example of a group that is not AR is the free group on more than four generators.

\begin{theorem} \label{theorem.freegroup} Let $F_d$ be the free group on $d$ generators, and let $\mu_d$ be uniform over the standard generating set of $F_d$. For $d >4$, $(F_d, \mu_d)$ is not AR. \end{theorem}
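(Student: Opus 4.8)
The plan is to show that the trace of the random walk is, with positive probability, contained in a proper subsemigroup of $F_d$ for $d > 4$, which by the remark preceding Lemma~\ref{lemma.Zd_subsemigroups} is enough to contradict AR (more precisely: we exhibit an event of positive probability on which $\bigS_1 \neq F_d$, so that AR fails). The natural candidate for a proper subsemigroup is the set of reduced words whose first letter lies in a fixed subset $L$ of the generating symbols $\{a_1^{\pm},\dots,a_d^{\pm}\}$ that is closed under the obvious ``prefix-survives-concatenation'' condition: if $u$ begins with a letter in $L$ and $v$ begins with a letter in $L$, we want $uv$ (after reduction) to again begin with a letter in $L$. This fails in general because of cancellation, so the actual subsemigroup must be cut out more cleverly — the key idea is to control not just the first letter but a bounded-length prefix, and to choose $L$ so that catastrophic cancellation between two elements of the semigroup cannot expose a forbidden prefix.

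Concretely, I would fix a partition-type structure on the generators exploiting $d > 4$: split the $2d \geq 10$ signed generators so that there is a ``large'' set $P$ of prefixes and a ``small'' complementary set, and define $\bigS$ to be the set of reduced words $w$ such that every prefix of $w$ of length up to some fixed $k$ lies in an appropriate language. The point of needing $d > 4$ (i.e. $2d > 8$, or perhaps $2d \geq 10$) is a counting/coding inequality: one needs enough generators that a random step almost surely keeps the walk inside the good set while still leaving a uniformly positive ``budget'' of generators so that the semigroup closure property survives cancellation. I would then verify (i) $\bigS$ is closed under the group product followed by reduction — this is the combinatorial heart and requires a careful case analysis of how a reduced word $uv$ looks in terms of the prefixes of $u$ and $v$; and (ii) $\bigS \neq F_d$, which is immediate since some generator is excluded.

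The final step is the probabilistic one: show $\Pr(X_n \in \bigS \text{ for all } n) > 0$. Since $\mu_d$ is uniform on the generators, $(X_n)$ is the non-backtracking-free simple random walk, and the event that the walk stays in $\bigS$ can be analyzed by looking at the induced Markov chain on the bounded-length ``current prefix'' of $X_n$ (or on a Schreier-graph-like quotient): staying in $\bigS$ corresponds to never leaving a certain subset of prefix-states. Because at each step a definite fraction of the $2d$ possible generators keeps us in the good set (here again $d>4$ is used to make that fraction exceed the probability of being forced out by cancellation toward the root), this event has positive probability — e.g. by a comparison with a biased walk on $\N$ that is transient to $+\infty$, or by a direct sub-multiplicative estimate on the probability of ever returning to a bad prefix.

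I expect the main obstacle to be step (i): proving that the candidate set is genuinely a subsemigroup despite reduction. Cancellation means the prefix of $uv$ can come from deep inside $v$ rather than from $u$, so the defining language for $\bigS$ has to be engineered so that \emph{every} prefix that could ever be exposed by such cancellation is still admissible; getting the bookkeeping right — choosing $k$, choosing the admissible prefix language, and checking all cancellation cases — is where the real work and the sharp dependence on $d$ will live, and is presumably why the authors flag this as the trickiest result and ask for a more geometric proof.
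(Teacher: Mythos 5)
Your overall goal --- exhibiting a positive-probability event on which $\bigS_{n_0}$ is contained in a proper subset of $F_d$ --- is the right one, but the specific mechanism you propose, namely a \emph{deterministic} proper subsemigroup cut out by a condition on prefixes of some fixed bounded length $k$, cannot work, and the failure is exactly at the step you flag as ``the combinatorial heart''. Suppose $S=\{w:\mathrm{pref}_k(w)\in L\}$ contains the trace. The length-$k$ \emph{suffixes} of the $X_n$ visit every reduced word of length $k$ infinitely often a.s., so $S$ contains some long $u$ whose terminal length-$k$ segment is the reversed inverse of a word $p\in L$. For any word $b$ with a forbidden length-$k$ prefix (chosen so as not to cancel into $u^{-1}$), the element $v=u^{-1}b$ has length-$k$ prefix equal to $p\in L$, hence $v\in S$, yet $uv=b\notin S$. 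So no bounded-depth prefix language rich enough to contain the trace can be closed under the product: total cancellation of $u$ exposes an arbitrary interior segment of $v$ as the new prefix, and no re-engineering of $L$ at fixed depth $k$ escapes this. This is why the paper does not exhibit a subsemigroup at all, deterministic or otherwise.

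What the paper actually does is define a \emph{random} set $A=\bigcup_r A_r$ adapted to the walk: $A_r$ is the set of length-$r$ words expressible as concatenations of segments of the $X_n$'s in which each junction involves only logarithmically much cancellation. Two quantitative facts, of which your proposal has no analogue, then carry the argument: (i) a counting bound $|A_r|\le 4^r$, coming from the fact that the trace has at most $\log j$ distinct prefixes of length $j$ (transience of the distance-to-root walk) and that the cut data of such a concatenation costs about two bits per letter; and (ii) a union bound showing that with positive probability, for all $n\ge n_0$ and all $w\in A$, the cancellation in the product of $X_n$ with $w$ is less than $\log|X_n|$ --- this requires $|A_{\log s}|(2d-1)^{-\log s}=s^{2-\log_2(2d-1)}$ to be summable, which is precisely where $d>4$ enters. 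So the role of $d>4$ is an entropy comparison between the growth rate $4$ of the set of possible partial products and the branching rate $2d-1$ of the tree, not (as you guess) a count of how many generators keep a single step inside a good set. The induction then gives $\bigS_{n_0}\subset A\subsetneq F_d$ even though $A$ is not a subsemigroup. To repair your argument you would need to (a) let the admissible-prefix language be random, adapted to the realized walk, and of unbounded depth, and (b) supply the counting estimate that makes the union bound over all partial products converge; without (b) nothing rules out the cascading total cancellations that destroy any prefix-based invariant.
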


Let $X_n(i,j)$ denote the symbols $i$ through $j$ of $X_n$, written in its reduced form. Let $\log$ denote log base 2, and let $\exp$ denote exponentiation base 2. We will use the following lemma.

\begin{lemma}\label{lemma.small_branches} There exists a $j_0$ such that 2ith positive probability, for all $j>j_0$ there are at most $\log{j}$ strings of length $j$ that appear as prefixes of some $X_n$. \end{lemma}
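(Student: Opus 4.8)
The plan is to work in the Cayley graph of $F_d$ with respect to the standard generators, which is the regular tree $\T$ of degree $2d$, and for each $j$ to control the number $U_j$ of distinct vertices at level $j$ (i.e.\ at distance $j$ from $e$) that are visited by the walk $(X_0=e,X_1,X_2,\dots)$. In a tree the trajectory $X_0,\dots,X_n$ is a path from $e$ to $X_n$ and hence contains the geodesic $[e,X_n]$, while conversely any vertex on $[e,X_n]$ lies on that trajectory; so a reduced word of length $j$ occurs as a length-$j$ prefix of some $X_n$ if and only if, regarded as a vertex of $\T$, it lies at level $j$ and is visited by the walk. Thus the quantity to be bounded is exactly $U_j$, and since the walk is transient with $|X_n|\to\infty$ a.s.\ we always have $U_j\ge1$. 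The statement will follow once we establish $\sum_{j\ge1}\Pr(U_j>\log j)<\infty$: the tails $\sum_{j>j_0}\Pr(U_j>\log j)$ then tend to $0$, so we may fix $j_0$ making this sum $<1$, and the union bound gives $\Pr(\forall j>j_0:\ U_j\le\log j)\ge 1-\sum_{j>j_0}\Pr(U_j>\log j)>0$.

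The heart of the matter is a geometric tail bound for $U_j$. Let $C_j$ be the number of \emph{up-crossings} of level $j$, that is, the number of steps the walk makes from a level-$(j-1)$ vertex to a level-$j$ vertex. Each step changes the level by exactly $\pm1$, and visiting any vertex above a level-$j$ vertex $v$ forces an earlier visit to $v$; hence the first visit to every level-$j$ vertex is via an up-crossing, and so $U_j\le C_j$. I would then estimate $C_j$ by a renewal description: just after an up-crossing the walk sits at some level-$j$ vertex $v$, and it never again descends below level $j$ precisely when it never hits the parent of $v$ (the only edge out of the cone above $v$), which by vertex-transitivity of $\T$ has probability $1-f$, where $f$ is the probability that simple random walk ever reaches a prescribed neighbour. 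A one-step computation gives $(2d-1)f^2-2df+1=0$, with root $f=\tfrac1{2d-1}$ in $(0,1)$. By the strong Markov property at the successive (a.s.\ finite) up-crossing times these ``escape'' events are i.i.d.\ of probability $\tfrac{2d-2}{2d-1}$, and whenever the walk does drop back below level $j$ the condition $|X_n|\to\infty$ forces a further up-crossing. Hence $C_j$ is geometric: $\Pr(C_j\ge k)=(2d-1)^{1-k}$ for all $k\ge1$.

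Combining the two facts, for $j\ge2$,
\[
\Pr(U_j>\log j)\ \le\ \Pr\!\left(C_j\ge\lceil\log j\rceil\right)\ =\ (2d-1)^{\,1-\lceil\log j\rceil}\ \le\ (2d-1)\cdot j^{-\log(2d-1)} ,
\]
using $\lceil\log j\rceil\ge\log j$ and $(2d-1)^{-\log j}=j^{-\log(2d-1)}$ (recall $\log=\log_2$). Since $d\ge2$ gives $\log(2d-1)\ge\log 3>1$, the series $\sum_j j^{-\log(2d-1)}$ converges, so $\sum_{j>j_0}\Pr(U_j>\log j)\to0$ as $j_0\to\infty$; choosing $j_0$ with this sum $<1$ completes the proof. (If one reads ``some $X_n$'' as ``some $n\ge N$'' for a fixed $N$, nothing changes, since deleting finitely many visited vertices only decreases each $U_j$.)

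The step I expect to be the main obstacle is making the geometric description of $C_j$ fully rigorous: one must verify, via the strong Markov property at the successive up-crossing times, that the events ``the walk never returns below level $j$ after this up-crossing'' are genuinely independent and each of probability $\tfrac{2d-2}{2d-1}$ — equality of these probabilities is exactly where vertex-transitivity of $\T$ enters, guaranteeing the escape probability does not depend on which level-$j$ vertex the up-crossing landed on — and one must also use transience in the precise form $|X_n|\to\infty$ a.s.\ to know that every descent below level $j$ is followed by another up-crossing. The remaining ingredients (identifying length-$j$ prefixes with visited level-$j$ vertices, the inequality $U_j\le C_j$, and the first-passage value $f=\tfrac1{2d-1}$) are routine.
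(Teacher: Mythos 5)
Your proof is correct and follows essentially the same strategy as the paper: reduce the count of length-$j$ prefixes to the number of times the radial part of the walk crosses level $j$, show that count has a geometric tail with ratio bounded below $1/2$, and conclude by a union bound over $j>j_0$. The only (harmless) difference is that you count up-crossings of level $j$ and compute the escape probability $\tfrac{2d-2}{2d-1}$ directly in the tree, whereas the paper counts all visits to level $j$ of the reflected biased walk on $\Z^{+}$; your version is in fact the more carefully justified of the two.
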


\begin{proof} Consider a random walk $(Y_1, Y_2, \ldots)$ on $\Z^+$, reflected at 0, with probabilities $(2d-1)/(2d)$ and $1/(2d)$ to increase by one or decrease by one, respectively. The number of strings of length $j$ that appear as prefixes of some $X_n$ is upper bounded by the number $V_j$ of visits to $j$ in this biased random walk. The $V_j$ for $j \neq 0$ are i.i.d.\ geometric random variables with parameter $p = $ probability of returning to $j$. The return probability $p$ satisfies the equation $p = 1/(2d) + \left(\frac{1}{2d}\right) \left( \frac{2d-1}{2d} \right) p$, implying $p = \frac{2d}{(2d)^2 - 2d + 1}$. We have $\Pr(V_j > \log{j}) = p^{\log{j}}$, so the probability that there is a $j>j_0$ with $V_j > \log{j}$ is at most
$$ \sum_{j > j_0} p^{\log{j}} = \sum_{j>j_0} j^{\log{p}} < 1,$$
for sufficiently large $j_0$, since $p < 1/2$ and therefore $\log{p} < 1$ for $d > 4$.
\end{proof}

 Define:
$$A_r = \{w : |w| = r, w =X_{n_1}(i_1, j_i)\cdots X_{n_m}(i_m,j_m), i_1 = 1, i_k \leq \log j_{k-1} \mbox{ for all } 2 \leq k \leq m\}.$$
We will show that there exists an $n_0$ such that with constant probability, all words in $\bigS_{n_0}$ have length-$r$ prefixes in $A_r$.
\begin{lemma} $|A_r| \leq 4^r$ with positive probability. \end{lemma}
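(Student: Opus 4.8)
The plan is to derive this as a deterministic consequence of the positive‑probability event $E$ furnished by Lemma~\ref{lemma.small_branches}: conditionally on $E$ (together, if necessary, with a further positive‑probability event controlling the walk near the origin, such as the walk escaping without returning so that the early $X_n$'s lie in a single branch), I will show $|A_r|\le 4^r$ for all sufficiently large $r$. This is what is needed downstream, where $4^r$ is compared with the number $2d(2d-1)^{r-1}$ of reduced words of length $r$; the latter wins as soon as $d\ge 3$, so for $d>4$ one obtains the desired failure of algebraic recurrence. Once we condition on $E$ the statement is purely combinatorial, so the real content is a counting estimate.

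First I would record a priori bounds on the data of a decomposition $w=X_{n_1}(1,j_1)\cdots X_{n_m}(i_m,j_m)\in A_r$. Put $\ell_k=j_k-i_k+1$, so $\sum_k\ell_k=r$; from $i_k\le\log j_{k-1}$ and $j_{k-1}=i_{k-1}+\ell_{k-1}-1\le i_{k-1}+r$ an easy induction gives $i_k\le 1+\log(2r)$ and hence $j_k\le 2r$ once $r$ is large. Writing $P_j$ for the number of distinct length-$j$ strings occurring as prefixes of some $X_n$, we have on $E$ that $P_j\le\log j\le 1+\log r$ for $j>j_0$, while $P_j$ is bounded by a constant $C_0=C_0(d,j_0)$ for $j\le j_0$ (and this case is excluded altogether on the good event that the walk avoids the $j_0$-ball after time $n_0$). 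Since a block $X_{n_k}(i_k,j_k)$ is determined by $i_k$, $\ell_k$, and the length-$j_k$ prefix of $X_{n_k}$, counting decompositions (with repetition) yields
\[
|A_r|\ \le\ \sum_{m\ge 1}\ \sum_{\substack{\ell_1+\cdots+\ell_m=r\\ \ell_k\ge 1}}\ \prod_{k\ge 2}\lfloor\log j_{k-1}\rfloor\ \cdot\ \prod_{k\ge 1}P_{j_k},
\]
with $P_{j}\le\max(C_0,\log j)$.

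The heart of the proof — and the step I expect to be the main obstacle — is to squeeze $4^r$ out of this sum. The naive bound, replacing every factor by $O(\log r)$ and the number of compositions of $r$ by $2^{r-1}$, only produces $2^{O(r\log\log r)}$ and is hopeless. The point is that the chained inequalities $i_k\le\log j_{k-1}$ and $j_k\le\ell_k+\log j_{k-1}$ forbid the blocks from all being short: a block can support a large depth $j_k$ (and hence a large polylogarithmic contribution) only after a commensurately long block $\ell_{k-1}$, and conversely, once a long block is ``spent'' the depths $j_k$ of the following blocks collapse through a $\log^{*}$-type tower down to $O(\log r)$ within $O(\log^{*} r)$ steps. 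Thus one should split the sum according to the few blocks whose depth exceeds a slowly growing threshold $\tau(r)$, bound their per-block factors by polylogarithms, and argue that the remaining blocks have depth and start-position $O(\log r)$ and length $\gtrsim\log r$, so that there are only $O(r/\log r)$ of them; the product of all polylogarithmic factors is then $2^{o(r)}$, the number of compositions into $O(r/\log r)$ parts is also $2^{o(r)}$, and one lands comfortably below $4^r$. Making the $\log^{*}$-collapse precise, and checking that no family of near-constant-length blocks can persist — this last being exactly where the choice of $n_0$ (keeping the short walk positions, the only possible source of constant-length blocks, out of play) is essential — is where I expect the genuine difficulty to lie.
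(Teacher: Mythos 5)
Your setup is the same as the paper's: decompose each $w\in A_r$ into blocks $X_{n_k}(i_k,j_k)$, note that a block is determined by $(i_k,\ell_k)$ and the choice of length-$j_k$ prefix, bound the prefix choices by Lemma~\ref{lemma.small_branches}, and sum over compositions of $r$. But the actual estimate -- the entire content of the lemma -- is deferred in your write-up (``where I expect the genuine difficulty to lie''), and the strategy you sketch for it does not work. The claim that the non-deep blocks ``have length $\gtrsim \log r$, so that there are only $O(r/\log r)$ of them'' is false: a word made of $r/2$ blocks of the form $X_{n_k}(1,2)$ satisfies every constraint defining $A_r$, since then $i_k=1\leq \log j_{k-1}=\log 2$. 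You acknowledge this obstruction but propose to kill it by choosing $n_0$ so that the walk positions are long; that is a non sequitur, because the blocks are \emph{prefixes} of the $X_n$, and a reduced word of length $10^6$ still has a length-$2$ prefix. So families of constant-length blocks do persist, your count of $O(r/\log r)$ blocks fails, and with it the claim that the number of relevant compositions is $2^{o(r)}$.

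The paper's resolution is different from both your ``naive bound'' and your case split, and it is worth seeing why it works where yours stalls. One never bounds an individual factor by $\log r$. Instead, the chain $j_k\leq \ell_k+i_k\leq \ell_k+\log j_{k-1}$ is used to telescope the whole product: $\prod_k \log j_k \leq \prod_k\bigl(\log\ell_k+\log\log\ell_{k-1}+\cdots\bigr)\leq \exp\bigl(2\sum_k\log\log\ell_k\bigr)$. The point is then the pointwise inequality $\log\log\ell\leq \ell/4$ (with equality at $\ell=4$), which together with $\sum_k\ell_k=r$ gives $\exp(2\sum_k\log\log\ell_k)\leq 2^{r/2}$ \emph{uniformly over all length profiles}, including the all-short-blocks profile: a block of length $\ell_k$ is charged only $\exp(\ell_k/2)$, not $\mathrm{polylog}(r)$. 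Applying the same bound once for the $i_k$'s and once for the prefix choices, and multiplying by the $2^{r-1}$ compositions, yields $2^{r-1}\cdot 2^{r/2}\cdot 2^{r/2}<4^r$ with no case analysis and no $\log^{*}$ argument. (Separately, your motivation for the constant $4$ is off: $4^r$ is not compared with the number of reduced words of length $r$, but enters Lemma~\ref{lemma.A_r_preserved} through $|A_{\log s}|(2d-1)^{-\log s}\leq s^{-\log((2d-1)/4)}$, which is where the threshold $d>4$ comes from.)
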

\begin{proof} Let $\ell_k = j_k-i_k$. There are $2^{r-1}$ ways to choose the $\ell_k$, so we only need to show that for any fixed $\{\ell_k\}$, there are fewer than $2^r$ ways to choose the $\{i_k\}$ and $\{X_{n_k}\}$. We have $i_k \leq \log{j_{k-1}} = \log{(i_{k-1}+\ell_{k-1})}$, so the number of ways to choose the $i_k$ is at most
\begin{align*}
&\log{j_1}\cdots \log_{j_m}\\
 \leq & \log{\ell_1}\cdot \log{(\ell_2 + \log{\ell_1})} \cdot \log{(\ell_3 + \log{(\ell_2 + \log{\ell_1})})} \cdots \cdots \log{(\ell_m + \log{(\ell_{m_1} + \cdots + \log\log\cdots\log{\ell_1})})}\\
 \leq & \Pi_{1 \leq k \leq m} (\log{\ell_k} + \log\log{\ell_{k-1}} + \cdots + \log \cdots\log \ell_1)\\
 = & \exp\left(\sum_{1 \leq k \leq m}\log (\log{\ell_k} + \log\log{\ell_{k-1}} + \cdots + \log \cdots\log \ell_1)\right)\\
 \leq & \exp\left(\sum_{1 \leq k \leq m} (\log \log{\ell_k} + \log \log\log{\ell_{k-1}} + \cdots + \log \cdots\log \ell_1)\right)\\
 \leq & \exp\left(\sum_{1 \leq k \leq m} (\log \log{\ell_k} + \log \log\log{\ell_{k}} + \cdots + \log \cdots\log \ell_k + 1)\right)\\
 \leq & \exp\left(2 \sum_{1 \leq k \leq m} \log \log{\ell_k}\right)\\
 \leq & \exp(2 (r/4))\\
  =& 2^{r/2}.
\end{align*}
where  the last inequality follows because $\sum_{1 \leq k \leq m} \log \log{\ell_k}$ is maximized when all of the $\ell_k$ are equal to 4. Similarly, Lemma~\ref{lemma.small_branches} tells us that with positive probability, the number of possible choices of the $\{X_{n_k}\}$ is also bounded by $\log{j_1} \cdots \log{j_m} \leq 2^{r/2}$. Thus, the we have $|A_r|<4^r$, as claimed.
\end{proof}
Let $A = \cup_r A_r$ and $d>4$. For reduced words $x$ and $y$, let $cancel(x,y)$ denote the number of symbols that are cancelled in the multiplication $x\cdot y$.

\begin{lemma}\label{lemma.A_r_preserved} There exists an $n_0$ such that with positive probability,  $cancel(X_n, w) < \log{|X_n|}$ for any $n>n_0$ and any $w \in A$. \end{lemma}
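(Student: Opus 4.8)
The statement asks for an $n_0$ such that with positive probability, for all $n > n_0$ and all $w \in A$, the cancellation $cancel(X_n, w)$ is less than $\log|X_n|$. Here $A = \bigcup_r A_r$, and the words in $A$ are precisely the words that arise as products $X_{n_1}(i_1,j_1)\cdots X_{n_m}(i_m,j_m)$ with $i_1 = 1$ and $i_k \le \log j_{k-1}$ — i.e. words obtainable by concatenating ``sub-intervals'' of the walk's increments, where each new interval starts within the first $\log(\text{previous length})$ symbols. The point of the bound $|A_r| \le 4^r$ is that $A_r$ is much thinner than the full ball of radius $r$ in $F_d$ (which has size $\sim (2d-1)^r$), so a random $X_n$ should have little interaction with $A$.

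**The plan.** First I would reduce to a counting/union-bound statement. Fix $n$ and a candidate cancellation length $r = \lfloor\log|X_n|\rfloor$; the event $cancel(X_n, w) \ge r$ for some $w \in A$ means the length-$r$ suffix of $X_n^{-1}$ — equivalently the length-$r$ prefix of $X_n$ read backwards/inverted — coincides with a length-$r$ prefix of some $w \in A$. So the event is controlled by: ``the length-$r$ prefix of $X_n^{-1}$ lies in $A_r$'' (together with the matching of what comes after, but the prefix condition is the binding one). Now I would estimate $\Pr(\text{length-}r\text{ prefix of }X_n^{-1} \in A_r)$. Since $X_n^{-1}$ is itself a (time-reversed) random walk word of length $\ge 2^r$, and the uniform walk on $F_d$ spreads roughly uniformly over reduced words of a given length, the chance its length-$r$ prefix hits a fixed set of size $\le 4^r$ is at most about $4^r / (2d-1)^r$, which for $d > 4$ (so $2d - 1 \ge 9 > 4$) decays geometrically in $r$, i.e. like $c^{\log|X_n|} = |X_n|^{\log c}$ with $\log c < 0$.

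**Summing the tail.** The next step is to sum the failure probability over $n > n_0$. Because $|X_n| \to \infty$ a.s.\ at linear speed (the uniform walk on $F_d$, $d\ge 2$, is transient with positive speed), $|X_n| \ge cn$ eventually, so $\Pr(\text{failure at }n) \lesssim (cn)^{-\alpha}$ for some $\alpha > 0$. To make $\sum_{n > n_0}$ of this finite and, more importantly, $< 1$ (so the complement has positive probability), I need $\alpha > 1$; if the crude exponent $\log(4/(2d-1))$ is not quite large enough in absolute value, I would instead match $r$ symbols of cancellation and then observe that the \emph{next} portion of $X_n$ must also agree with the structure of $w$ — but more cleanly, I would choose to demand cancellation at least $r = \lfloor\log|X_n|\rfloor$ which already gives room since $(2d-1)^{\log|X_n|} = |X_n|^{\log(2d-1)}$ and $\log(2d-1) - \log 4 = \log\frac{2d-1}{4} > 1$ exactly when $2d - 1 > 8$, i.e.\ $d > 4.5$, hence $d \ge 5$. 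For $d = 5$ one gets $2d-1 = 9$, $\log(9/4) = \log 2.25 \approx 1.17 > 1$, so the sum converges; by taking $n_0$ large the sum is $< 1$, establishing positive probability (also intersecting with the positive-probability events of the earlier lemmas, which I keep by a final union bound being small).

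**Main obstacle.** The delicate point is justifying the ``spreading'' estimate $\Pr(\text{prefix of }X_n^{-1}\in A_r) \lesssim (|A_r| / (2d-1)^r)$ cleanly: the distribution of the length-$r$ prefix of $X_n$ is \emph{not} exactly uniform over reduced words of length $r$ (short words have extra mass since the walk can be near the origin), and one must handle the conditioning on $|X_n| \ge 2^r$ and the time-reversal. I would address this by noting that conditioned on $|X_n| = L \ge 2^r$, the first $r$ letters of the reduced form of $X_n$ are distributed as the first $r$ steps of a walk conditioned never to backtrack to length $< r$ — a Markov chain that, for each new letter, picks uniformly among the $2d - 1$ non-cancelling generators — so each fixed length-$r$ string has probability exactly $(2d-1)^{-(r-1)} \cdot \frac{1}{2d}$ or so, and summing over the $\le 4^r$ strings in $A_r$ gives the bound. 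Combined with transience ($\Pr(|X_n| < 2^{\log|X_n|})$ is not an issue since that's automatic) this yields the geometric-in-$r$ estimate, and the tail sum then closes the argument.
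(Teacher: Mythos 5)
Your argument is essentially the paper's: bound the probability that a fixed $w$ cancels more than $\log|X_n|$ letters by $(2d-1)^{-\log|X_n|}$, union-bound over the at most $4^{\log|X_n|}$ relevant prefixes in $A_{\log|X_n|}$ to get a failure probability of order $|X_n|^{-\log((2d-1)/4)}$ with exponent below $-1$ exactly when $d>4$, and then sum a convergent tail after forcing $|X_n|$ to be large via transience. The only differences are bookkeeping (you sum over $n$ using linear speed, whereas the paper sums over word-lengths $s$ and bounds the number of indices with $|X_n|=s$ by $\log s$) and a harmless prefix/suffix mix-up in your setup (the cancellation in $X_n\cdot w$ is governed by the last $\log|X_n|$ letters of $X_n$, i.e.\ the prefix of $X_n^{-1}$, which by reversibility of the i.i.d.\ increments has the same law as the prefix you analyze).
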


\begin{proof} For a random $X$ with $|X| = s$ and a fixed $w$, we have $\Pr(cancel(X,w) > \log{s}) \leq  (2d-1)^{-\log{s}}$ (times a factor $\frac{2d-1}{2d}$ that we will ignore). For any $w \in A_r$ with $r > \log{n}$, the length $\log{n}$ prefix of $w$ is in $A_{\log{n}}$. So the probability that there exists a $w \in A$ such that $cancel(X,w) > \log{s}$ is at most $|A_{\log{s}}|(2d-1)^{-\log{s}} \leq s^{-\log((2d-1)/4)}$ by a union bound. When $d > 4$, we have $-\log((2d-1)/4) < -1$. Union bounding this over the at most $\log{s}$ choices of $X_n$ for which $|X_n| = s$, and then over all $s \geq s_0$, we get $\sum_{s \geq s_0} ( \log{s} ) s^{-\log((2d-1)/4)}$. Because the sum converges, we can choose $s_0$ large enough that this sum is strictly less than one. By the transitivity of the random walk, there exists an $n_0$ so that with positive probability, $|X_n| > s_{min}$ for all $n>n_0$.
\end{proof}

\begin{proof}[Proof of Theorem~\ref{theorem.freegroup}]  The proof is by induction. $X_n \in A$ by definition for all $n$. By Lemma~\ref{lemma.A_r_preserved}, with positive probability, if $X_{n_1} \cdots X_{n_m} \in A$ for all $n_1, \ldots n_m \geq n_0$, then $X_{n_1} \cdots X_{n_{m+1}} \in A$ for all $n_1, \ldots n_{m+1} \geq n_0$. Thus, $\bigS_{n_0} \subset A \neq F_d$.
\end{proof}

\section{Open Questions}\label{section:Questions}
This first study leaves several natural questions open.
\begin{itemize}
\item {\em Is $(G, \mu)$-AR a group property?} We do not know if it is possible for there to be two symmetric measures $\mu_1$ and $\mu_2$ on a group $G$ with $\langle \mu_1 \rangle = \langle \mu_2 \rangle = G$ such that $(G, \mu_1)$ is AR and $(G, \mu_2)$ is not AR. A first step towards determining whether this is possible could be to prove that there is no $\mu$ for which $(F_d, \mu)$ is AR.

\item {\em Is AR preserved under taking finite index subgroups?} It follows from Lemma \ref{lem:recAR} that if $G$ contains a finite-index AR subgroup, then it is AR. We do not know if the converse holds.

\item {\em Non-AR groups.} We strongly suspect that the free group on two generators in not AR, but our proof technique is not strong enough to show this, and it does not follow immediately from the fact that $F_d$ is a finite index subgroup of $F_2$ (see the previous question). On the other hand, perhaps the proof of Theorem~\ref{theorem.freegroup} extends to small cancellation groups with growth at least $10^n$. More generally, it would be interesting to prove that small cancellation groups or hyperbolic groups are not AR. (There are nonamenable torsion groups  \cite{Ol}, ruling out the possibility that no nonamenable groups are AR.)

\item {\em What is the structure of the sequence of semigroups in non-AR groups?} For non-AR groups, we know that $\bigS_n$ is not all of $G$, but it would be interesting to determine other properties of $\bigS_n$. For example, what is the growth of $\bigS_n$? Is $\bigS_n$ transient? Is the intersection $ \cap_n \bigS_n$ empty almost surely? In particular what do the limit sets of such semigroups look like in free groups?

\item{\em Are Liouville random walks algebraically recurrent?} Recall that the converse is false (see Section \ref{section.torsion_Z}).

\item{\em Infinitely generated groups.}
Infinitely generated groups present a separate challenge. For example, we do not know if the abelian group $\oplus_\Z \Z$ is AR.

\item{\em Quantitative versions.}
It follows from Theorem 3.2 in \cite{KXZ} that, letting $R$ denote the range of Brownian motion in $\R^n$, $\R^n$ is almost surely covered by $R+R+...+R$,   $\lceil n/2 +1  \rceil$ times. A simpler second moment argument gives a similar statement with $\lceil d/2  \rceil$  for $\Z^d$ or nilpotent groups. This suggests that quantitative variants of algebraic recurrence could be interesting. For example, when $(G, \mu)$ is AR, estimate the probability $\bigS_n = G$.

%\item {\em Does AR define a non-trivial boundary?}

\end{itemize}

\noindent \textbf{Acknowledgements:} Thank you to Elon Lindenstrauss and Ron Peled for pointing out errors in earlier versions of the proofs of Theorems~\ref{theorem.Zd} and \ref{theorem.freegroup}, respectively. Thank you also to Uri Bader, Vadim Kaimanovich and Yehuda Shalom for useful comments.

\end{document}